
\documentclass[12pt]{amsart}
\usepackage{amssymb,amscd}
\usepackage{verbatim}

\usepackage{xcolor}

\usepackage{amsmath,amssymb,graphicx,mathrsfs}   
\usepackage{enumerate}
\usepackage[colorlinks=true,allcolors = blue]{hyperref} 

\usepackage{tikz}
\usetikzlibrary{matrix}

\usepackage[all]{xy}


\usepackage{amssymb,amsfonts,amsthm,amsmath,calligra}
\usepackage{slashed}
\usepackage{yfonts}
\usepackage{mathrsfs,pifont}
\usepackage{float}

\usepackage[all]{xy}

\usepackage{tikz}

\usepackage{graphicx}
\usepackage{xcolor}

\usepackage{amssymb,amsfonts,amsthm,amsmath}
\usepackage[all]{xy}
\usepackage{slashed}
\usepackage{yfonts}
\usepackage{mathrsfs,pifont}


\textwidth 6.5truein
\textheight 8.67truein
\oddsidemargin 0truein
\evensidemargin 0truein
\topmargin 0truein

\let\frak\mathfrak

\def\>{\relax\ifmmode\mskip.666667\thinmuskip\relax\else\kern.111111em\fi}
\def\<{\relax\ifmmode\mskip-.333333\thinmuskip\relax\else\kern-.0555556em\fi}
\def\vsk#1>{\vskip#1\baselineskip}
\def\vv#1>{\vadjust{\vsk#1>}\ignorespaces}
\def\vvn#1>{\vadjust{\nobreak\vsk#1>\nobreak}\ignorespaces}

  \let\ssize\scriptstyle
\let\sssize\scriptscriptstyle

\let\Medskip\medskip
\def\medskip{\par\Medskip}
\let\Bigskip\bigskip
\def\bigskip{\par\Bigskip}

\let\Maketitle\maketitle
\def\maketitle{\Maketitle\thispagestyle{empty}\let\maketitle\empty}

\newtheorem{thm}{Theorem}[section]
\newtheorem{cor}[thm]{Corollary}

\theoremstyle{definition}                                  
\numberwithin{equation}{section}

\theoremstyle{definition}

\let\mc\mathcal
\let\nc\newcommand

\let\la\lambda

\let\phi\varphi

\let\der\partial

\let\geq\geqslant

\let\leq\leqslant

\let\on\operatorname
\let\bi\bibitem
\let\bs\boldsymbol

\def\C{{\mathbb C}}
\def\Z{{\mathbb Z}}

\def\F{{\mathbb F}}

\def\+#1{^{\{#1\}}}

\def\beq{\begin{equation}}
\def\eeq{\end{equation}}
\def\be{\begin{equation*}}
\def\ee{\end{equation*}}

\nc{\bea}{\begin{eqnarray*}}
\nc{\eea}{\end{eqnarray*}}
\nc{\bean}{\begin{eqnarray}}
\nc{\eean}{\end{eqnarray}}

\nc{\Il}{{\mc I_{\bs\la}}}
\nc{\bla}{{\bs\la}}
\nc{\Fla}{\F_\bla}
\nc{\tfl}{{T^*\Fla}}
\nc{\GL}{{GL_n(\C)}}
\nc{\GLC}{{GL_n(\C)\times\C^*}}

\let\sd s 

\def\ddk_#1{\kk_{#1}\<\>\frac\der{\der\<\>\kk_{#1}}}

\def\bul{\mathbin{\raise.2ex\hbox{$\sssize\bullet$}}}
\def\intt{\mathchoice
{\mathop{\raise.2ex\rlap{$\,\,\ssize\backslash$}{\intop}}\nolimits}
{\mathop{\raise.3ex\rlap{$\,\sssize\backslash$}{\intop}}\nolimits}
{\mathop{\raise.1ex\rlap{$\sssize\>\backslash$}{\intop}}\nolimits}
{\mathop{\rlap{$\sssize\<\>\backslash$}{\intop}}\nolimits}}

\let\kk q 
\let\cc c

\let\Ko K

\def\GZ/{Gelfand-Zetlin}
\def\KZ/{{\slshape KZ\/}}
\def\qKZ/{{\slshape qKZ\/}}
\def\XXX/{{\slshape XXX\/}}

\nc{\A}{{\mc A}}

\nc{\hsl}{\widehat{{\frak{sl}_2}}}

\nc{\BC}{{ \mathbb C}}
\nc{\lra}{\longrightarrow}
\nc{\CO}{{\mathcal{O}}}
\nc{\BZ}{{ \mathbb Z}}
\nc{\hfn}{\hat{\frak{n}}}
\nc\Zs{{\Z/p^s\Z}}
\nc\Zo{{\Zs[z]^0}}
\nc\gr{{\on{gr}}}

\nc\fD{{\frak D}}

\newcommand{\T}{\mathsf{T}}

\newcommand{\matZ}{\mathbb{Z}}

\newcommand{\ovl}{\overline}

\newcommand{\bv}{\mathsf{v}}

\usepackage{tikz}

\usetikzlibrary{decorations}
\usetikzlibrary{decorations.pathmorphing}
\usetikzlibrary{calc}


\newsavebox{\Ipm}
\savebox{\Ipm}{%
\begin{tikzpicture}[baseline= {($(current bounding box.base)-(0pt,-20pt)$)}]
\begin{scope}
\draw[dashed,line width=0.35mm] (0,0)-- (10,0) ;
\node[circle,draw,minimum size=4mm,fill=black] (c) at (0,0){};
\node[circle,draw,minimum size=4mm,fill=black] (c) at (2,0){};
\node[circle,draw,minimum size=4mm,fill=black] (c) at (4,0){};
\node[circle,draw,minimum size=4mm,fill=black] (c) at (6,0){};
\node[circle,draw,minimum size=4mm,fill=black] (c) at (8,0){};
\node[circle,draw,minimum size=4mm,fill=black] (c) at (10,0){};
\draw [-to,line width=0.5mm](0,0) -- (1.8,0);
\draw [-to,line width=0.5mm](8,0) -- (9.8,0);
\node[rectangle,draw,minimum size=4mm,fill=black] (c) at (4,-2){};
\draw [-to,line width=0.5mm](4,-2) -- (4,-0.2);
\node[rectangle,draw,minimum size=4mm,fill=black] (c) at (6,-2){};
\draw [-to,line width=0.5mm](6,-2) -- (6,-0.2);
\node at (0,0.5) { $\bv_1$};
\node at (2,0.5) { $\bv_2$};
\node at (4,0.5) { $\bv_k$};
\node at (6,0.5) { $\bv_{n-k}$};
\node at (8,0.5) { $\bv_{n-2}$};
\node at (10,0.5) { $\bv_{n-1}$};
\node at (4,-2.6) { $z_1$};
\node at (6,-2.6) { $z_2$};
\end{scope}
\end{tikzpicture}}

\begin{document}

\hrule width0pt
\vsk->

\title[Dwork congruences via $q$-deformation]
{Dwork congruences via $q$-deformation}

\author
[Pavan Kartik and Andrey Smirnov]
{Pavan Kartik$^{\diamond}$ and Andrey Smirnov$^{\star}$ }

\maketitle

\begin{center}
{ Department of Mathematics, University
of North Carolina at Chapel Hill\\ Chapel Hill, NC 27599-3250, USA\/}

\end{center}

\vsk>
{\leftskip3pc \rightskip\leftskip \parindent0pt \Small
{\it Key words\/}:  Dwork congruences, vertex functions, $q$-deformation.

\vsk.6>
{\it 2020 Mathematics Subject Classification\/}: 
\par}


{\let\thefootnote\relax
\footnotetext{\vsk-.8>\noindent
$^\star\<${\sl E\>-mail}:\enspace asmirnov@email.unc.edu
\\
$^\diamond\<${\sl E\>-mail}:\enspace  pkartik1@unc.edu}}

\begin{abstract}
We consider a system of polynomials $T_{s}(z,q) \in \mathbb{Z}[z,q]$ which appear as truncations of the K-theoretic vertex function for the cotangent bundles over Grassmannians $T^{*}Gr(k,n)$. We prove that these polynomials satisfy a natural $q$-deformation of Dwork's congruences
$$
\dfrac{T_{s+1}(z,q)}{T_{s}(z^p,q^p)} \equiv  \dfrac{T_{s}(z,q)}{T_{s-1}(z^p,q^p)}  \pmod{[p^s]_q}  
$$
In the limit $q \to 1$ we recover the main result of \cite{SV}.

\end{abstract}


\setcounter{footnote}{0}
\renewcommand{\thefootnote}{\arabic{footnote}}

\section{Introduction}

\subsection{}

In his study of zeta functions of families of algebraic varieties \cite{Dw69} Dwork encountered remarkable congruences. These congruences are satisfied by truncations of the period integrals, which describe the Gauss-Manin connection for such families. The basic example here is the Legendrian family of elliptic curves
\bean \label{ellcurve}
y^2= x (1-x) (1-z x) 
\eean
parametrized by $z\in \mathbb{C}\setminus\{0,1\}$. Let $\omega = dx/y$ be the holomorphic differential on the curve and let
\bean \label{Eulerint}
F(z)=  \dfrac{1}{\pi} \oint_{\gamma} \omega = \dfrac{1}{\pi}\int\limits_{0}^{1} \, \dfrac{dx}{\sqrt{x (1-x) (1-z x)}} = \sum\limits_{n=0}^{\infty} \binom{-1/2}{n}^2 z^n\, 
\eean 
be its period over the $a$-cycle $\gamma$. As a function of the parameter $z$ this period is described by the classical hypergeometric series
\bean \label{hyperghal}
F(z)=  {}_{2}F_{1}(1/2,1/2;1;z)
\eean 
which satisfies the well-known hypergeometric differential equation.

Let $p$ be a prime number. Dwork found that the truncated power series
\bean \label{dwocong}
T_{s}(z) = \sum\limits_{n=0}^{p^s-1} \binom{-1/2}{n}^2 z^n, \ \ \ s =1,2,\dots
\eean 
satisfy the following congruences
$$
\dfrac{T_{s+1}(z)}{T_{s}(z^p)} \equiv  \dfrac{T_{s}(z)}{T_{s-1}(z^p)}\, \ \ \pmod{p^s} 
$$
Let $z \in \mathbb{Z}_p$ be a $p$-adic integer with residue $z \pmod{p} =z_0 \in \mathbb{F}_p$. Among other things, the congruences (\ref{dwocong}) imply that over $\mathbb{Q}_p$ the limit 
\bean \label{unroot}
\lambda(z_0)=(-1)^{\frac{p-1}{2}}\lim\limits_{s\to \infty} \dfrac{T_{s+1}(z)}{T_{s}(z^p)} 
\eean
exists and is equal to the unit root of zeta function of the curve (\ref{ellcurve}) with parameter $z_0$. The second root is equal to $p/\lambda(z_0)$. In this way, the congruences (\ref{dwocong}) fully determine the zeta function of (\ref{ellcurve}), i.e., the numbers of points on (\ref{ellcurve}) over finite fields $\mathbb{F}_{p^l}$. 
 
\subsection{}
Since then Dwork's results have been expanded considerably and general theories explaining the underlying mechanism of such congruences have been developed, see \cite{K85,BV1} for examples. 

It was observed in \cite{SV} that interesting examples of Dwork congruences appear naturally for the {\it cohomological vertex functions} of Nakajima varieties. These functions control the equivariant quantum cohomology of quiver varieties \cite{MO19} and 
can be viewed as far reaching generalizations of the classical hypergeometric series.  In fact, the  classical hypergeometric functions are the vertex functions for the simplest Nakajima varieties given by the cotangent bundles over projective spaces 
$T^{*} \mathbb{P}^n$.

In \cite{SV} an analog of truncations (\ref{dwocong}) for the vertex functions of  the cotangent bundles over Grassmannians $T^{*} Gr(k,n)$ were considered. It was proven that these polynomials also satisfy the Dwork's congruences. 

It is known that cohomological vertex functions are limits of a more general and richer structures  - the {\it K-theoretic vertex functions} \cite{Oko17}. For simple cases, e.g., $T^{*} \mathbb{P}^n$ these functions are given by the standard $q$-hypergeometric series. The cohomological vertex functions can be viewed as specializations at $q=1$ of the K-theoretic vertex functions. 

It is therefore natural to ask whether a natural $q$-deformation of the Dwork's congruences exists. In this paper we give a positive answer to this question:  we consider the same Nakajima variety $T^{*} Gr(k,n)$ as in \cite{SV} and for a prime $p$ we define polynomials 
$$
T_{s}(z,q) \in  \mathbb{Z}[z,q], \ \ s \in \mathbb{N},
$$
which are certain truncations of the corresponding K-theoretic vertex function. Specialized  at $q=1$ these polynomials coincide with the truncations considered in \cite{SV}. As our main result, we prove the following 
\begin{thm} \label{mainthm}
The polynomials $T_{s}(z,q)$ satisfy the $q$-deformed Dwork's congruences
\bean \label{congint}
\dfrac{T_{s+1}(z,q)}{T_{s}(z^p,q^p)} \equiv  \dfrac{T_{s}(z,q)}{T_{s-1}(z^p,q^p)}  \pmod{[p^s]_q}  
\eean 
\end{thm}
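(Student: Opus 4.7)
The key idea is to reduce the congruence modulo $[p^s]_q$ to a collection of exact polynomial identities in $z$ at roots of unity, each of which follows from a $q$-Lucas-type factorization of the truncated vertex sum.

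\emph{Step 1 (cyclotomic reduction).} Over $\mathbb{Z}[q]$ we have $[p^s]_q = \prod_{j=1}^{s} \Phi_{p^j}(q)$. Hence a polynomial $f(z,q) \in \mathbb{Z}[z,q]$ is divisible by $[p^s]_q$ if and only if $f(z,\zeta) = 0$, as a polynomial in $z$, for every primitive $p^j$-th root of unity $\zeta$ with $1 \le j \le s$. Theorem \ref{mainthm} is therefore equivalent to the family of exact identities
\begin{equation*}
T_{s+1}(z,\zeta)\, T_{s-1}(z^p,\zeta^p) \;=\; T_{s}(z,\zeta)\, T_{s}(z^p,\zeta^p)
\end{equation*}
for all such $\zeta$.

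\emph{Step 2 ($q$-Lucas factorization).} The polynomial $T_s(z,q)$ is an explicit truncated $q$-hypergeometric sum over $k$-tuples $\mathbf{d}=(d_1,\dots,d_k)$ with $0 \le d_i < p^s$. For $\zeta$ a primitive $p^j$-th root of unity, split each index as $d_i = a_i + p^j b_i$ with $0 \le a_i < p^j$ and $0 \le b_i < p^{s-j}$. Exploiting the identity $(x;\zeta)_{p^j} = 1 - x^{p^j}$ and a D\'esarm\'enien-type factorization of each $q$-Pochhammer appearing in the summand, one aims to establish the multiplicative decomposition
\begin{equation*}
T_{s}(z,\zeta) \;=\; T_{j}(z,\zeta)\cdot T_{s-j}\bigl(z^{p^j},\,1\bigr) \qquad (s \ge j).
\end{equation*}

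\emph{Step 3 (formal cancellation).} Apply this decomposition to each factor in the target identity. Since $\zeta^p$ is a primitive $p^{j-1}$-th root of unity (or $1$, when $j=1$) and $(z^p)^{p^{j-1}} = z^{p^j}$, both sides of the identity telescope to the common product
\begin{equation*}
T_{j}(z,\zeta)\cdot T_{j-1}(z^p,\zeta^p)\cdot T_{s-j+1}(z^{p^j},1)\cdot T_{s-j}(z^{p^j},1),
\end{equation*}
proving the identity at $\zeta$. The case $j=1$ is handled identically, with the convention $T_{0}(z^p,1) = 1$. Specializing the full congruence at $q = 1$ (where $[p^s]_q \mapsto p^s$) recovers the main theorem of \cite{SV}.

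\emph{Main obstacle.} The substantive work is Step 2. The summand of $T_s(z,q)$ is a product of shifted $q$-Pochhammer symbols dictated by the combinatorics of $T^{*}\mathrm{Gr}(k,n)$, and one must show that each such symbol splits cleanly under the digit decomposition $d_i = a_i + p^j b_i$ at $q=\zeta$: a low-index factor evaluated at $q = \zeta$ times a high-index factor in $z^{p^j}$ evaluated at $q=1$, with no residual cross-terms mixing $\mathbf{a}$ and $\mathbf{b}$. Once this uniform factorization is verified for the specific vertex summand, the remainder of the proof is the formal cancellation of Step 3.
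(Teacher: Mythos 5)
Your Steps 1 and 3 are exactly the paper's strategy: $[p^s]_q=\prod_{j=1}^{s}\Phi_{p^j}(q)$, so it suffices to prove the cross-multiplied identity exactly at every primitive $p^j$-th root of unity $\zeta$ ($1\le j\le s$), and once one has a factorization separating a "low" factor in $(z,\zeta)$ from a "high" factor in $(z^{p^j},1)$, the four instances telescope precisely as you write (using that $\zeta^p$ is a primitive $p^{j-1}$-th root of unity). The gap is that all of the content sits in your Step 2, which you assert but do not prove, and the route you sketch for it is not the one that works most readily. The $T_s$ of the paper are not presented as a truncated sum over $k$-tuples $\mathbf{d}$; they are defined as the coefficient of a fixed monomial $x^{dp^s-1}$ in a product of linear factors $\Phi_s(x,z,q)=\Delta(x)\overline{\Phi}_s(x,z,q)$ (the superpotential/integral presentation). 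With that presentation the root-of-unity collapse is immediate: every $q$-bracket in $\overline{\Phi}_s$ whose length exceeds the corresponding length in $\overline{\Phi}_l$ by a multiple of $p^l$ contracts via $(1-x)(1-xq)\cdots(1-xq^{p^l-1})=1-x^{p^l}$, giving $\overline{\Phi}_s(x,z,q)=\overline{\Phi}_l(x,z,q)\,\overline{\Phi}_{s-l}(x^{p^l},z^{p^l},1)$ with no cross-terms to worry about. The genuinely nontrivial step is then not the factorization of the product but the factorization of the \emph{coefficient}: a priori the coefficient of $x^{dp^s-1}$ in a product is a sum over all ways of splitting the exponent between the two factors. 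The paper kills this sum using two inputs you do not have in your sketch: a degree bound $\deg_{x_{ij}}\Phi_l<(\mathrm{v}_i+1)p^l-1$, which together with reduction mod $p^l$ forces the low-digit exponent vector to be $\sigma(d)p^l-1$ for some $\sigma$ in the product of symmetric groups, and the skew-symmetry of $\Phi_l$, which identifies each such contribution with $\epsilon(\sigma)T'_l(z,q)$. This is what lets the factor $T'_l(z,q)$ pull out of the sum.

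A second, smaller divergence: the paper never actually proves your clean identity $T_s(z,\zeta)=T_j(z,\zeta)\,T_{s-j}(z^{p^j},1)$. It only establishes $T'_s(z,q)=T'_l(z,q)F(z^{p^l})$ for \emph{some} polynomial $F$ independent of $q$, and then shows by a term-by-term bijection (your "no residual cross-terms" concern, resolved by a digit-shifting argument on the exponent vectors) that the $F$'s occurring in $T'_{s+1}(z,q)$ and in $T'_s(z^p,q^p)$ coincide, and likewise for the other pair. That is logically sufficient for the telescoping and sidesteps identifying $F$ with a $T$-polynomial; it also avoids the sign bookkeeping that your unqualified $T_{s-j}(z^{p^j},1)$ would require (the $q=1$ specialization of the deformed polynomials differs from the undeformed ones by a sign $(-1)^{\theta_s}$, which the paper tracks separately when deducing the $q=1$ corollary). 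So: right skeleton, same route as the paper in outline, but the lemma you flag as the "main obstacle" is indeed the entire proof, and closing it requires the coefficient-extraction argument (degree bound plus skew-symmetry), not just the Pochhammer identity $(x;\zeta)_{p^j}=1-x^{p^j}$.
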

Let us explain the notations. For a natural number $n$ we denote by
$$
[n]_q = \dfrac{1-q^n}{1-q} = 1+q+\dots+q^{n-1}
$$
the corresponding $q$-number. In particular, for a prime $p$, $[p]_q= 1+ q+ \dots + q^{p-1} = \varphi_p(q)$ is the $p$-th cyclotomic polynomial. For a power $p^s$ we have
\bean \label{pqnum}
[p^s]_q = \varphi_{p}(q) \varphi_{p}(q^p) \dots  \varphi_{p}(q^{p^{s-1}})
\eean
The congruence (\ref{congint}) means that the difference of the left and right sides is divisible by the polynomial $[p^s]_q$.

In the limit $q\to 1$, the polynomials $T(z,q)$ specialize to the polynomials $T_s(z)$ considered in \cite{SV}. The $q$-number $[p^s]_q$ specializes to $p^s$ in this limit and as a simple corollary of the above Theorem we arrive at the main result of \cite{SV}. 

\subsection{}
We note that compared to \cite{SV} our proof of $q$-congruences is completely elementary. To prove Theorem \ref{mainthm} it is enough to check that the congruence (\ref{congint}) becomes identity
$$
\dfrac{T_{s+1}(z,q)}{T_{s}(z^p,q^p)} =  \dfrac{T_{s}(z,q)}{T_{s-1}(z^p,q^p)}
$$
when $q$ is specialized to a non-trivial root of equation $X^{p^s}=1$ (note that $[p^s]_q=0$ when $q$ is such a root). The combinatorial proof we describe here is essentially by iterative use of an elementary identity: 
$$
(1-x)(1-x q) \dots (1-x q^{l-1})  = (1-x^l)
$$
when $q$ is a primitive $l$-th unit root.

\subsection{} 
As in (\ref{unroot}), the $q$-congruences  (\ref{congint})  imply that the limit 
$$
\lambda(z,q)=(-1)^{\frac{p-1}{2}}\lim\limits_{s\to \infty} \dfrac{T_{s+1}(z,q)}{T_{s}(z^p,q^p)} 
$$
considered over $\mathbb{Q}_p$, exists. In fact it follows from our proof that when $q$ is specialized to a unit root
$$
\lambda(z,q) = (-1)^{\frac{p-1}{2}}\lim\limits_{s\to \infty}\, \dfrac{T_{s+1}(z,q)}{T_{s}(z^p,q^p)} =\dfrac{T_{L+1}(z,q)}{T_{L}(z^p,q^p)}
$$
for sufficiently large $L$. In particular, for $q$ given by $p$-adic unit roots, $\lambda(z,q)$ is a rational function of $z$. 

The element $\lambda(z,q)$ is a certain $q$-deformation of the unit root for the corresponding zeta function, as we discussed above. We do not know if the $q$-deformation $\lambda(z,q)$ has 
a similar arithmetic significance, or if it is related to natural $q$-deformations arising in arithmetic geometry  \cite{SCH}. One may expect that $\lambda(z,q)$ is one of the ``eigenvalues'' of the $q$-deformed Frobenius intertwiner considered for the  
$q$-hypergeometric case in  \cite{S}, see also \cite{KS}.

\section*{Acknowledgments} This work is
supported by NSF grants DMS - 2054527, DMS-2401380 and by the Simons Foundation grant “Travel Support for Mathematicians”. We thank A.\,Varchenko for useful discussions. 

\section{Polynomials $T_s(z,q)$} 
In this section we define polynomials $T_s(z,q)$ which are truncations of the $K$-theoretic vertex functions for Nakajima variety $X=T^*Gr(k,n)$. This definition is inspired by $3D$-mirror symmetry which equips  the vertex functions with integral representations, which are analogs of Euler's integral representation (\ref{Eulerint}) for the hypergeometric function (\ref{hyperghal}). We refer to \cite{SV} for details and motivations.

\subsection{Superpotential function}
For the rest of this note we assume that 

\[\omega=\frac{r}{q}\leq\frac{1}{2},\ r,q \text{ positive integers} \]
\\ 

Let $p$ be an odd prime of the form $p=lq+1,\ \ l$ a positive integer. Let $n,\ k$ be positive integers so that $n\geq 2k$. We associate an $A_{n-1}$ framed quiver to each pair $(k,n)$ with vertices labelled $1,2,\dots n-1$ as follows:\\
The quiver has nontrivial one-dimensional framings at vertices $k$ and $n-k$, and we define the dimension vector by
 \[\text{v}_{i}:=   \begin{cases}
        i & i<k\\
        k & k\leq i\leq n-k\\
        n-i & n-k < i
    \end{cases}
    \]
To a vertex $i$ with dimension $\text{v}_{i}$ we associate to it variables $x_{i,j}$ where $1\leq j\leq \text{v}_{i}$, and to a framing vertex we associate variables $z_{i,j}$ where $1\leq j\leq \text{w}_{i}$ (where $\text{w}_{i}$ is 1, since we consider 1-dimensional framings at vertices $k$ and $n-k$), see figure below. 
\begin{figure}[h!]
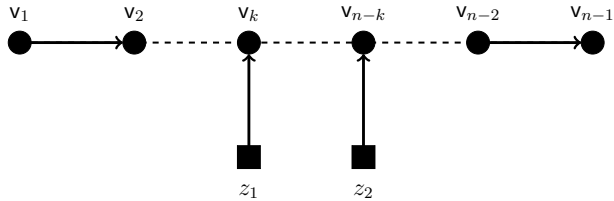

$$
\resizebox{0.5\textwidth}{!}{%
\usebox{\Ipm}}
$$
\caption{$A_{n-1}$ quiver with two framings} 
\end{figure}

\begin{itemize}
    \item To an arrow from vertex $j$ to vertex $i$ we associate a factor 
    \[\prod_{a=1}^{\text{v}_{i}}\prod_{b=1}^{\text{v}_{j}}(x_{i,a}-x_{j,b})^{-\omega}\]
    \item To a vertex $m$ of the quiver we associate a factor 
    \[\prod_{1\leq i<j\leq \text{v}_{m}}(x_{m,i}-x_{m,j})^{2\omega}\]
    \item To a vertex $m$ of the quiver we associate a factor 
    \[\prod_{j=1}^{\text{v}_{m}}(x_{m,j})^{-1+\omega}\]
\end{itemize}

These rules give us the following superpotential, denoted by $\Phi$
\begin{multline}
\Phi(x,z):=\left(\prod_{i=1}^{n-1}\prod_{j=1}^{\text{v}_{i}}x_{i,j}\right)^{-1+\omega}\times\prod_{m=1}^{n-1}\prod_{1\leq i<j\leq \text{v}_{m}}(x_{m,i}-x_{m,j})^{2\omega}\\
\times \prod_{i=1}^{n-2}\prod_{a=1}^{\text{v}_{i}}\prod_{b=1}^{\text{v}_{i+1}}(x_{i,a}-x_{i+1,b})^{-\omega}\times\prod_{i=1}^{k}((z_{k,1}-x_{k,i})(z_{n-k,1}-x_{n-k,i}))^{-\omega}
\end{multline}
We consider the specialization $z_{k,1}=z,\ z_{n-k,1}=1$, and if $n-k=k$ we assume $z_{k,1}=z$ and $z_{k,2}=1$.
We also define 
\[\Delta(x):=\prod_{m=1}^{n-1}\prod_{1\leq i<j\leq \text{v}_{m}}(x_{m,j}-x_{m,i})\]
and 
\[\Phi_{s}(x,z):=(\Delta(x))^{p^{s}}\times(\Phi(x,z))^{1-p^{s}}\]
\[\Phi_{s}(x,z)=\overline{\Phi}_{s}(x,z)\Delta(x)\]
\\
Let us define polynomials
\bean \label{tpols} T_{s}(z)=(-1)^{(p^{s}-1)N\omega}\text{coeff}_{x^{dp^{s}-1}}\Phi_{s}(x,z)
\eean
where $\text{coeff}_{x^{dp^{s}-1}}$ denotes the coefficients of the monomial 
\[x^{dp^{s}-1}:=\prod_{i=1}^{n-1}\prod_{j=1}^{\text{v}_{i}}x_{i,j}^{jp^{s}-1}\]
in variables $x_{i,j}$. The integer $N$ is defined in Theorem 3.2 of \cite{SV} and simply fixes the sign so that $\T_s(0)=1$.  The following is the main result of \cite{SV}. 
\begin{thm} 
\label{dworkth}
The polynomials $T_s(z)$ satisfy the Dwork-type congruences:
\bean
 \label{dworkid}
\dfrac{T_{s+1}(z)}{T_s(z^p)}=\dfrac{T_{s}(z)}{T_{s-1}(z^p)} \mod p^s
\eean
with $s=1,2,\dots$ and $T_{0}(z)=1$. 
\end{thm}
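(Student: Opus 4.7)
My plan is to derive Theorem \ref{dworkth} as the specialization at $q=1$ of the $q$-deformed statement in Theorem \ref{mainthm}. The truncation $T_s(z,q)\in\mathbb{Z}[z,q]$ is defined so that $T_s(z,1)=T_s(z)$, and the $q$-integer $[p^s]_q$ specializes to the ordinary integer $p^s$. Hence once we know that
$$T_{s+1}(z,q)\,T_{s-1}(z^p,q^p) \;-\; T_s(z,q)\,T_s(z^p,q^p) \;\in\; [p^s]_q\cdot\mathbb{Z}[z,q],$$
substituting $q=1$ gives a congruence of integer polynomials modulo $p^s$, which is exactly \eqref{dworkid}. The real work is therefore proving Theorem \ref{mainthm}, and I would do this along the lines announced in the introduction.

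That strategy runs as follows. By \eqref{pqnum} we have $[p^s]_q = \varphi_p(q)\,\varphi_p(q^p)\cdots\varphi_p(q^{p^{s-1}})=\varphi_p(q)\,\varphi_{p^2}(q)\cdots\varphi_{p^s}(q)$, a product of distinct (hence pairwise coprime) monic cyclotomic polynomials in $\mathbb{Z}[q]$. Consequently a polynomial in $\mathbb{Z}[z,q]$ is divisible by $[p^s]_q$ if and only if it vanishes whenever $q$ is specialized to a primitive $p^t$-th root of unity $\zeta$, for any $1\leq t\leq s$. I would fix such a $\zeta$ and upgrade the desired congruence to the \emph{exact} identity of rational functions of $z$,
$$\dfrac{T_{s+1}(z,\zeta)}{T_{s}(z^p,\zeta^p)} \;=\; \dfrac{T_{s}(z,\zeta)}{T_{s-1}(z^p,\zeta^p)}.$$
Equivalently, once $\zeta$ has been fixed with order $p^t$, the ratio $T_{m+1}(z,\zeta)/T_m(z^p,\zeta^p)$ becomes independent of $m$ for $m\geq t-1$. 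This stabilization is the heart of the $q$-Dwork phenomenon, and it would also yield the description of $\lambda(z,q)$ at roots of unity promised after \eqref{congint}.

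The combinatorial engine powering the stabilization is the elementary identity
$$(1-x)(1-xq)(1-xq^2)\cdots(1-xq^{l-1}) \;=\; 1-x^l,$$
valid for any primitive $l$-th root of unity $q$. The $K$-theoretic vertex truncation $T_s(z,q)$ is (after $q$-deformation of \eqref{tpols}) a coefficient extracted from a sum of products of $q$-shifted linear factors in the variables $x_{i,j}$ and $z$. At $q=\zeta$ of order $p^t$, one organizes those products into consecutive blocks of length $p^t$; each complete block collapses via the above identity to a product in the $p^t$-th powers of the variables. This collapse should express $T_{s+1}(z,\zeta)$ as a product of a ``$p^t$-rescaled'' truncation (identifiable with $T_{s+1-t}(z^{p^t},1)$) and a finite residual factor depending only on $z$, $\zeta$, and $t$. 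Performing the same reorganization on $T_s(z^p,\zeta^p)$ produces the same rescaled truncation, and the quotient is precisely the $s$-independent residual factor that realizes the stabilization.

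The main obstacle I expect is the bookkeeping of the multi-index coefficient extraction in \eqref{tpols}: the monomial target is $\prod_{i,j} x_{i,j}^{jp^s-1}$, and the $q$-deformed $\Phi_s$ involves nested products over arrows of the $A_{n-1}$ quiver, over pairs $(i,j)$ at each vertex $i$ with $1\leq j\leq \bv_i$, and over the framings at vertices $k$ and $n-k$. One must verify that grouping the $q$-shifts into length-$p^t$ blocks is compatible with the monomial extraction in every index range, and that the ``incomplete'' boundary blocks reassemble exactly into the factors corresponding to $T_{t-1}$ and $T_t$, with no spurious scalars (the sign normalization $(-1)^{(p^s-1)N\omega}$ in \eqref{tpols} should account for the signs introduced by the collapses). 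Once the single-variable prototype of the $q$-collapse is pinned down, extending it over the whole quiver is a careful but in principle mechanical combinatorial computation, after which both Theorem \ref{mainthm} and its $q=1$ specialization, Theorem \ref{dworkth}, follow.
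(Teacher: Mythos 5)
Your route is the same as the paper's: deduce Theorem \ref{dworkth} by setting $q=1$ in the $q$-deformed congruence (\ref{mainform}), and prove the latter by checking that the congruence becomes an exact identity whenever $q$ is a primitive $p^t$-th root of unity with $1\le t\le s$, using the collapse $(1-x)(1-xq)\cdots(1-xq^{l-1})=1-x^l$ to factor $\overline{\Phi}_{s}$ into a ``low'' piece $\overline{\Phi}_{t}$ times a $p^t$-rescaled piece in the variables $x^{p^t}$. Your sketch of the root-of-unity argument matches the paper's, including the cancellation of a common factor in the two ratios (the paper calls these factors $F_i$ and $G_i$ and shows $F_i=G_i$ by matching monomials). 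The step you defer as ``bookkeeping'' is indeed where the paper spends its effort: one needs the degree bound (\ref{degbound}) together with the skew-symmetry of $\Phi_{t}$ under $S_{\text{v}}$ to see that the only exponent vectors contributing to the convolution of coefficients are the permutations $\sigma(d)$ of the target degree vector, which is what allows the factor $T'_{t}(z,q)$ to be pulled out of the sum; without this observation the factorization of $\overline{\Phi}$ does not obviously descend to a factorization of the extracted coefficient.

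There is one concrete gap in your deduction of the stated theorem itself. With the paper's normalization (\ref{tqdef}) it is \emph{not} true that $T_s(z,1)=T_s(z)$: because the $q$-deformation (\ref{Fbar}) replaces each factor $(x_{m,i}-x_{m,j})^{2\omega(p^s-1)}$ by a product over ordered pairs $i\neq j$, one has $T_s(z,1)=(-1)^{\theta_s}T_s(z)$ with $\theta_s$ as in Section~\ref{signref}. Setting $q=1$ in (\ref{mainform}) therefore yields (\ref{dworkid}) only up to the sign $(-1)^{\theta_{s+1}-2\theta_s+\theta_{s-1}}$, and one must verify that this sign equals $+1$; the paper does so by checking that $\theta_{s+1}-\theta_{s-1}$ is even (it contains the even factor $p+1$ coming from $p^{s+1}-p^{s-1}=p^{s-1}(p-1)(p+1)$). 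Since the statement under review is precisely the $q=1$ specialization, this sign verification is essentially the whole content of its proof given the main theorem, so it cannot be skipped.
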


\subsection{$q$-deformation \label{signref}}
Consider the following $q$-deformation of $\ovl{\Phi}_{s}(x,z)$ given by 
\begin{align} \label{Fbar}
\overline{\Phi}_{s}(x,z,q)=(\prod_{m=1}^{n-1}\prod_{j=1}^{\text{v}_{m}}x_{m,i})^{(p^{s}-1)(1-\omega)}\times(\prod_{m=1}^{n-1}\prod_{1\leq i\neq j\leq \text{v}_{m}}\prod_{r=0}^{\frac{(p^{s}-1)(1-2\omega)}{2}-1}(x_{m,i}-q^{r}x_{m,j}))\\ \nonumber
\times(\prod_{i=1}^{n-2}\prod_{a=1}^{\text{v}_{i}}\prod_{b=1}^{\text{v}_{i+1}}\prod_{r=0}^{(p^{s}-1)\omega-1}(x_{i,a}-q^{r}x_{i+1,\ b}))\times(\prod_{i=1}^{k}\prod_{r=0}^{(p^{s}-1)\omega-1}(z-q^{r}x_{k,i})(1-q^{r}x_{n-k,\ i}))
\end{align}
And we define 
\begin{align} \label{F}
\Phi_{s}(x,z,q):=\Delta(x)\overline{\Phi}_{s}(x,z,q)
\end{align}
The group $S_{\text{v}}:=S_{\text{v}_{1}}\times\dots\times S_{\text{v}_{n-1}}$ acts on the set of variables $\{x_{i,j}\}_{1\leq i\leq n-1,\ 1\leq j\leq v_{i}}$ by permuting the coordinates as follows:
    \begin{align*}
        \sigma\cdot x_{ij}=x_{i\sigma(j)}
    \end{align*}
Note that in our definition $\ovl{\Phi}_s$ is symmetric under $S_{\text{v}}$, and $\Delta$ is skew symmetric, so $\Phi_{s}$ is skew symmetric. 
In the $q=1$ case, $\overline{\Phi}_{s}(x,z,q)$ specializes to $(-1)^{\theta_{s}}\overline{\Phi}_{s}(x,z)$, where \[\theta_{s}:=\left(\sum_{m=1}^{n-1}\frac{\text{v}_{m}(\text{v}_{m}-1)}{2}\right)\frac{(p^{s}-1)(1-2\omega)}{2}\]\\
We define polynomials $T_{s}(z,q) \in \matZ[z,q]$ by 
\bean \label{tqdef}
T_{s}(z,q)=(-1)^{(p^{s}-1)N\omega}\text{coeff}_{x^{dp^{s}-1}}\Phi_{s}(x,z,q)
\eean 
with the same notations as above. Note that at $q=1$, the polynomials $\Phi_{s}(x,z,q)$ and $T_{s}(z,q)$ specialize to $(-1)^{\theta_{s}}\Phi_{s}(x,z)$ and $(-1)^{\theta_{s}}T_{s}(z)$ respectively. \\

\noindent
{{\bf Example:} 
If $n=2$, $\omega=\frac{1}{2}$ and $k=1$ we have $n-1=1$ vertex with $\text{v}_{1}=1$, and the variable associated to this vertex is $x_{1,1}$, which we will denote by $x$. According to the rules mentioned above, we have 

\[\Phi_{s}(x,z,q)=x^{\frac{(p^{s}-1)}{2}}\times\prod_{r=0}^{\frac{(p^{s}-1)}{2}-1}(z-q^{r}x)(1-q^{r}x)\]
and thus 
\[T_{s}(z,q)=\text{coeff}_{x^{p^{s}-1}}\left((-1)^{\frac{(p^{s}-1)}{2}}x^{\frac{p^{s}-1}{2}}\prod_{r=0}^{\frac{p^{s}-1}{2}-1}(z-q^{r}x)(1-q^{r}x)\right).\]
\section{$q$-deformation of Dwork's congruences} 

\subsection{$q$-deformed Dwork's congruences}

Here is our main result. 

\begin{thm}
The polynomials $T_{s}(z,q)$ satisfy $q$-deformed Dwork's congruence
\bean \label{mainform}
\dfrac{T_{s+1}(z,q)}{T_{s}(z^p,q^p)} \equiv  \dfrac{T_{s}(z,q)}{T_{s-1}(z^p,q^p)}  \pmod{[p^s]_q}  
\eean 
where $[p^s]_q$ denotes the $q$-number (\ref{pqnum}).
\end{thm}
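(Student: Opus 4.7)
\medskip\noindent\textbf{Proof plan.} Following the strategy hinted at in \S 1.3, the plan is to reduce the congruence to a polynomial identity at each root of $[p^s]_q$, and to verify that identity by a direct cyclotomic collapse of the $(a-q^r b)$-type products in the definition (\ref{Fbar}) of $\Phi_s$.

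\medskip\noindent\emph{Step 1 (reduction to roots of unity).} By (\ref{pqnum}),
$$[p^s]_q\;=\;\varphi_p(q)\,\varphi_p(q^p)\cdots\varphi_p(q^{p^{s-1}})\;=\;\prod_{j=1}^{s}\varphi_{p^j}(q)$$
is a product of distinct irreducible cyclotomic polynomials in $\matZ[q]$. Hence a polynomial $A(z,q)\in\matZ[z,q]$ is divisible by $[p^s]_q$ if and only if $A(z,\zeta)=0$ for every primitive $p^j$-th root of unity $\zeta$ with $1\le j\le s$. Applying this to
$$A(z,q)\;=\;T_{s+1}(z,q)\,T_{s-1}(z^p,q^p)\;-\;T_s(z,q)\,T_s(z^p,q^p),$$
the congruence (\ref{mainform}) reduces to the family of polynomial identities
$$T_{s+1}(z,\zeta)\,T_{s-1}(z^p,\zeta^p)\;=\;T_s(z,\zeta)\,T_s(z^p,\zeta^p),$$
one for each primitive $p^j$-th root of unity $\zeta$ with $1\le j\le s$.

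\medskip\noindent\emph{Step 2 (cyclotomic collapse of the $\Phi_s$'s).} At such a root $\zeta$ of order $p^j$, each of the four product factors in (\ref{Fbar}) simplifies via the identity
$$\prod_{r=0}^{p^j-1}(a-\zeta^r b)\;=\;a^{p^j}-b^{p^j},$$
which for any length $N=Qp^j+R$ with $0\le R<p^j$ yields $\prod_{r=0}^{N-1}(a-\zeta^r b)=(a^{p^j}-b^{p^j})^Q\prod_{r=0}^{R-1}(a-\zeta^r b)$. I would apply this to each of the lengths $(p^{s+1}-1)\omega$ and $(p^{s+1}-1)(1-2\omega)/2$ appearing in (\ref{Fbar}), obtaining a factorization of $\ovl\Phi_{s+1}(x,z,\zeta)$ into a ``Frobenius part'' involving only the variables $x_{i,j}^{p^j}$, $z^{p^j}$, and a ``residual part'' of low degree in the $x_{i,j}$. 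Since $\zeta^p$ has order $p^{j-1}$, the same collapse (with $p^j$ replaced by $p^{j-1}$) produces parallel decompositions of $\ovl\Phi_s(x,z^p,\zeta^p)$, $\ovl\Phi_s(x,z,\zeta)$, and $\ovl\Phi_{s-1}(x,z^p,\zeta^p)$. The arithmetic identity $p^{s+1}-1=p(p^s-1)+(p-1)$ should dictate how the quotients $Q$ and remainders $R$ at level $s+1$ relate to those at level $s$, and after extracting the coefficients of $x^{dp^{s+1}-1}$ and $x^{dp^s-1}$ via (\ref{tqdef}), the Frobenius contributions should match on the two sides of the identity of Step 1 while the residual contributions recombine into the required products of lower-level $T$'s.

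\medskip\noindent\emph{Main obstacle.} The heart of the argument will be the combinatorial bookkeeping of this collapse. The exponent bounds $(p^{s+1}-1)\omega$, $(p^{s+1}-1)(1-2\omega)/2$ are generally \emph{not} divisible by $p^j$, so the residual factors after collapse are nontrivial and must reassemble --- respecting the symmetry under $S_{\text v}$, the skew-symmetry of $\Delta(x)$, and the sign $(-1)^{\theta_s}$ recorded in \S\ref{signref} --- into precisely the polynomials $\ovl\Phi_s(x,z,\zeta)$ and $\ovl\Phi_{s-1}(x,z^p,\zeta^p)$ required for the identity. Moreover, the monomial extraction $\text{coeff}_{x^{dp^s-1}}$ in (\ref{tqdef}) does not split naively along the Frobenius/residual decomposition, so one must verify that the correct exponents emerge when all four product types in (\ref{Fbar}) are collapsed simultaneously over all variables $x_{i,j}$. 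Carrying out this coordinated collapse is the $q$-deformed analog of the Frobenius-lift combinatorics underlying the classical Dwork argument, and is where the main computation lies.
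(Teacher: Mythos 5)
Your Steps 1 and 2 reproduce exactly the strategy of the paper: reduce the congruence to the vanishing of $T_{s+1}(z,\zeta)\,T_{s-1}(z^p,\zeta^p)-T_s(z,\zeta)\,T_s(z^p,\zeta^p)$ at every primitive $p^l$-th root of unity $\zeta$ with $1\le l\le s$, and then collapse each $q$-product in (\ref{Fbar}) via $\prod_{r=0}^{p^l-1}(a-\zeta^r b)=a^{p^l}-b^{p^l}$. One correction to your Step 2: since the exponent lengths $(p^s-1)\omega$ and $(p^s-1)(1-2\omega)/2$ are congruent modulo $p^l$ to the corresponding lengths with $s$ replaced by $l$, the residual factors are the \emph{low-level} polynomials, giving $\ovl\Phi_{s+1}(x,z,\zeta)=\ovl\Phi_l(x,z,\zeta)\,\ovl\Phi_{s-l+1}(x^{p^l},z^{p^l},1)$ and $\ovl\Phi_s(x^p,z^p,\zeta^p)=\ovl\Phi_{l-1}(x^p,z^p,\zeta^p)\,\ovl\Phi_{s-l+1}(x^{p^l},z^{p^l},1)$, not a reassembly into $\ovl\Phi_s$ and $\ovl\Phi_{s-1}$ themselves. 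The Frobenius factors then cancel in the ratio, which equals $\Delta(x)\ovl\Phi_l(x,z,\zeta)\big/\bigl(\Delta(x^p)\ovl\Phi_{l-1}(x^p,z^p,\zeta^p)\bigr)$ independently of $s$; this is the desired identity at the level of the $\Phi$'s.

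However, what you label the ``main obstacle'' --- passing from this factorization to the identity for the coefficients $T_s(z,q)=\pm\,\text{coeff}_{x^{dp^s-1}}\Phi_s$ --- is the actual content of the proof, and you have not supplied the idea that makes it work, so there is a genuine gap. The extraction does split, for the following reason: a direct degree count gives $\deg_{x_{ij}}\Phi_s<(\text{v}_i+1)p^s-1$, and $\Phi_s=\Delta\cdot\ovl\Phi_s$ is skew-symmetric under $S_{\text v}$. In the convolution computing $\text{coeff}_{x^{dp^s-1}}$ of the product, the exponent contributed by the Frobenius factor is divisible by $p^l$, so the exponent taken from $\Phi_l(x,z,\zeta)$ must be of the form $up^l-1$; the degree bound forces $1\le u^{(i)}_j\le \text{v}_i$, skew-symmetry forces the $u^{(i)}_j$ to be distinct for each $i$, hence $u=\sigma(d)$ for some $\sigma\in S_{\text v}$ and the corresponding coefficient equals $\epsilon(\sigma)T'_l(z,\zeta)$. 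This common factor pulls out, yielding $T'_{s+1}(z,\zeta)=T'_l(z,\zeta)F_1(z^{p^l})$, $T'_s(z^p,\zeta^p)=T'_{l-1}(z^p,\zeta^p)G_1(z^{p^l})$, and likewise $F_2$, $G_2$ for the right-hand ratio; one must then exhibit a term-by-term bijection proving $F_1=G_1$ and $F_2=G_2$ (in the paper this is the elementary manipulation of degree vectors relating $\xi+\alpha p^l-1=dp^{s+1}-1$ to $\xi+p(\alpha p^{l-1}-1)=p(dp^s-1)$), plus the final bookkeeping of the sign $(-1)^{(p^s-1)N\omega}$. Without the degree-bound/skew-symmetry argument and the $F_i=G_i$ matching, your plan is a correct outline of the paper's route but not yet a proof.
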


\begin{proof}
    It is enough to show that 
    \[\frac{T_{s+1}(z,q)}{T_{s}(z^{p},q^{p})}=\frac{T_{s}(z,q)}{T_{s-1}(z^{p},q^{p})}\]
    when $q$ is a root of $X^{p^{s}}=1,\ q\neq 1$.\newline
    We can find bounds on the degree of each variable $x_{ij}$ by considering the following cases:
    \begin{itemize}
    \item If $k=1$, then $\text{v}_{i}=1$ for each $ 1\leq i\leq n-1$. If $i\neq 1$ and $i\neq n-1$, then by a degree count of (\ref{F}), we conclude that
    \[\text{deg}_{x_{ij}}(\Phi_{s})=(p^{s}-1)(1-\omega)+2(p^{s}-1)\omega\]
    \item If $k=1$ and $i=1$ or $i=n-1$, then by a similar degree counting argument of (\ref{F}) we get
    \[\text{deg}_{x_{ij}}(\Phi_{s})=(p^{s}-1)(1-\omega)+(p^{s}-1)\omega-1+(p^{s}-1)\omega-1\]
    \item If $k\neq 1$ and $i=1$ or $i=n-1$, then $v_{i}=1$, so the degree of $x_{i,1}$ in $\Phi_{s}(x,z)$ (hence in $\Phi_{s}(x,z,q)$) is 
    \[\text{deg}_{x_{ij}}(\Phi_{s}(x,z,q))=(p^{s}-1)(1-\omega)+(p^{s}-1)\omega=\text{v}_{i}p^{s}-1\]
    \item If $k\neq 1$ and $i\neq 1$ and $i\neq n-1$, then 
    \[\text{deg}_{x_{ij}}(\Phi_{s}(x,z,q))=\text{v}_{i}p^{s}-1+\omega(p^{s}-1)\]
    \end{itemize}
    So we observe that in each of these cases we have 
    \begin{align} \label{degbound}
    \text{deg}_{x_{ij}}(\Phi_{s}(x,z,q))<(\text{v}_{i}+1)p^{s}-1\end{align}
    Recall that the group $S_{\text{v}}:=S_{\text{v}_{1}}\times\dots\times S_{\text{v}_{n-1}}$ acts on the set of variables $\{x_{i,j}\}_{1\leq i\leq n-1,\ 1\leq j\leq v_{i}}$ by permuting the coordinates as follows:
    \begin{align*}
        \sigma\cdot x_{ij}=x_{i\sigma(j)}
    \end{align*}
    Hence $S_{\text{v}}$ also acts on the polynomial $\Phi_{s}(x,z,q)$. By the skew-symmetry of $\Phi_{s}(x,z,q)$ observed in Section \ref{Fbar}, we conclude that 
    \[(\sigma\cdot\Phi_{s})(x,z,q)=\Phi_{s}(\sigma\cdot x,z,q)=\epsilon(\sigma)\Phi_{s}(x,z,q)\]
    If $l<s$, we have
    \begin{align*}
        (p^{s}-1)(1-\omega)-(p^{l}-1)(1-\omega)&=p^{l}(p^{s-l}-1)(1-\omega)\\
        \frac{(p^{s}-1)(1-2\omega)}{2}-\frac{(p^{l}-1)(1-2\omega)}{2}&=p^{l}\left(\frac{(p^{s-l}-1)(1-2\omega)}{2}\right)\\        
        (p^{s}-1)\omega-(p^{l}-1)\omega&=p^{l}(p^{s-l}-1)\omega
    \end{align*}
    Hence, if $q$ is a primitive $p^{l}-$th root of unity (with $0<l<s$), we have 
    \begin{multline*}
\prod_{m=1}^{n-1}\prod_{1\leq i\neq j\leq \text{v}_{m}}\prod_{r=0}^{\frac{(p^{s}-1)(1-2\omega)}{2}-1}(x_{m,i}-q^{r}x_{m,j}) = \\
 \prod_{m=1}^{n-1}\prod_{1\leq i\neq j\leq \text{v}_{m}}\prod_{r=0}^{\frac{(p^{l}-1)(1-2\omega)}{2}-1}(x_{m,i}-q^{r}x_{m,j})
\times\left(\prod_{m=1}^{n-1}\prod_{1\leq i\neq j\leq \text{v}_{m}}(x_{m,i}^{p^{l}}-x_{mj}^{p^{l}})\right)^{\frac{(p^{s-l}-1)(1-2\omega)} {2}}  
\end{multline*}
similarly
\begin{multline*}
 \prod_{i=1}^{n-2}\prod_{a=1}^{\text{v}_{i}}\prod_{b=1}^{\text{v}_{i+1}}\prod_{r=0}^{(p^{s}-1)\omega-1}(x_{i,a}-q^{r}x_{i+1,\ b})=\\\prod_{i=1}^{n-2}\prod_{a=1}^{\text{v}_{i}}\prod_{b=1}^{\text{v}_{i+1}}\prod_{r=0}^{(p^{l}-1)\omega-1}(x_{i,a}-q^{r}x_{i+1,\ b})\times(\prod_{i=1}^{n-2}\prod_{a=1}^{\text{v}_{i}}\prod_{b=1}^{\text{v}_{i+1}}(x_{i,a}^{p^{l}}-x_{i+1,\ b}^{p^{l}}))^{(p^{s-l}-1)\omega}   
\end{multline*}
also
\begin{multline*}
\prod_{i=1}^{k}\prod_{r=0}^{(p^{s}-1)\omega-1}(z-q^{r}x_{k,i})(1-q^{r}x_{n-k,\ i})=\\ \prod_{i=1}^{k}\prod_{r=0}^{(p^{l}-1)\omega-1}(z-q^{r}x_{k, i})(1-q^{r}x_{n-k,\ i})
 \times(\prod_{i=1}^{k}(z^{p^{l}}-x_{k,i}^{p^{l}})(1-x_{n-k,\ i}^{p^{l}}))^{(p^{s-l}-1)\omega}    
\end{multline*}
and finally
\begin{multline*}
 \prod_{m=1}^{n-1}\prod_{i=1}^{\text{v}_{m}}x_{m,i}^{(p^{s}-1)(1-\omega)}=\prod_{m=1}^{n-1}\prod_{i=1}^{\text{v}_{m}}x_{m,i}^{(p^{l}-1)(1-\omega)}\times(\prod_{m=1}^{n-1}\prod_{i=1}^{\text{v}_{m}}x_{m,i}^{p^{l}})^{(p^{s-l}-1)(1-\omega)}
\end{multline*}

Using the last four equalities, from (\ref{Fbar}) we find that:
\begin{align} \label{split1}
\overline{\Phi}_{s+1}(x,z,q)&=\overline{\Phi}_{l}(x,z,q)\overline{\Phi}_{s-l+1}(x^{p^{l}},z^{p^{l}},1)
\end{align}
\begin{align} \label{split2}
\overline{\Phi}_{s}(x,z^{p},q^{p})&=\overline{\Phi}_{l-1}(x,z^{p},q^{p})\overline{\Phi}_{s-l+1}(x^{p^{l-1}},z^{p^{l}},1)
\end{align}
\begin{align} \label{split3}
\overline{\Phi}_{s}(x,z,q)&=\overline{\Phi}_{l}(x,z,q)\overline{\Phi}_{s-l}(x^{p^{l}},z^{p^{l}},1)
\end{align}
\begin{align} \label{split4}
\overline{\Phi}_{s-1}(x,z^{p},q^{p})&=\overline{\Phi}_{l-1}(x,z^{p},q^{p})\overline{\Phi}_{s-l}(x^{p^{l-1}},z^{p^{l}},1) 
\end{align}
\newline We can rewrite (\ref{split2}) and (\ref{split3}) by replacing $x$ with $x^{p}$ to get:
\begin{align} \label{reqd1}
    \overline{\Phi}_{s}(x^{p},z^{p},q^{p})&=\overline{\Phi}_{l-1}(x^{p},z^{p},q^{p})\overline{\Phi}_{s-l+1}(x^{p^{l}},z^{p^{l}},1)
\end{align}
\begin{align} \label{reqd2}
    \overline{\Phi}_{s-1}(x^{p},z^{p},q^{p})&=\overline{\Phi}_{l-1}(x^{p},z^{p},q^{p})\overline{\Phi}_{s-l}(x^{p^{l}},z^{p^{l}},1)
\end{align}
We can multiply by $\Delta(x)$ on both sides of (\ref{split1}) and (\ref{split3}) to get:
\begin{align}\label{numlhs}
   \Phi_{s+1}(x,z,q)&=\Phi_{l}(x,z,q)\overline{\Phi}_{s-l+1}(x^{p^{l}},z^{p^{l}},1) 
\end{align}
\begin{align}\label{numrhs}
    \Phi_{s}(x,z,q)&=\Phi_{l}(x,z,q)\overline{\Phi}_{s-l}(x^{p^{l}},z^{p^{l}},1)
\end{align}
We can also multiply by $\Delta(x^{p})$ on both sides of (\ref{reqd1}) and (\ref{reqd2}) to get:
\begin{align}\label{denlhs}
    \Phi_{s}(x^{p},z^{p},q^{p})&=\Phi_{l-1}(x^{p},z^{p},q^{p})\overline{\Phi}_{s-l+1}(x^{p^{l}},z^{p^{l}},1)
\end{align}
\begin{align}\label{denrhs}
    \Phi_{s-1}(x^{p},z^{p},q^{p})&=\Phi_{l-1}(x^{p},z^{p},q^{p})\overline{\Phi}_{s-l}(x^{p^{l}},z^{p^{l}},1)
\end{align}
Dividing (\ref{numlhs}) by (\ref{denlhs}), and (\ref{numrhs}) by (\ref{denrhs}) we conclude
\begin{align}\label{Fcong}
\frac{\Phi_{s+1}(x,z,q)}{\Phi_{s}(x^{p},z^{p},q^{p})}=\frac{\Phi_{s}(x,z,q)}{\Phi_{s-1}(x^{p},z^{p},q^{p})}
\end{align}
Let $d=(d^{(i)}_{j})_{1\leq i\leq n-1,\ 1\leq j\leq \text{v}_{i}}$ with $d^{(i)}_j=j$ be a degree vector and let
\bean \label{tprimes} T'_{s}(z):=\text{coeff}_{x^{dp^{s}-1}}(\Phi_{s}(x,z)), \ \ 
T'_{s}(z,q):=\text{coeff}_{x^{dp^{s}-1}}(\Phi_{s}(x,z,q)),
\eean 
i.e., the coefficients of the monomial
\[x^{dp^{s}-1}:=\prod_{i=1}^{n-1}\prod_{j=1}^{\text{v}_{i}}x_{ij}^{jp^{s}-1}\]
The last polynomial is equal to $T_s(z,q)$ up to the sign in (\ref{tqdef}).
\\
From (\ref{numrhs}) we see that the coefficient of the monomial $x_{ij}^{jp^{s}-1}$ in $\Phi_{s}(x,z,q)$ comes from $\Phi_{l}(x,z,q)$ and $\overline{\Phi}_{s-l}(x^{p^{l}},z^{p^{l}},1)$. Namely we have
\begin{multline} \label{sumoversplits} 
T'_{s}(x,z,q)=\text{coeff}_{x^{dp^{s}-1}}\Big(\Phi_{s}(x,z,q)\Big) =  \sum\limits_{u,v,\alpha,\beta}\, 
\text{coeff}_{x^{u p^{l}-\alpha}}\Big(\Phi_{l}(x,z,q)\Big) \times\\ \text{coeff}_{x^{p^l(v p^{s-l}-\beta)}}\Big(\Phi_{s-l}(x^{p^l},1,z^{p^l})\Big)
\end{multline}
where the sum is over vectors $u$, $v$, $\alpha$, $\beta$ such that 
\[up^{l}-\alpha+p^{l}(vp^{s-l}-\beta)=dp^{s}-1\]
with $1\leq u^{(i)}_{j},v^{(i)}_{j}\leq \text{v}_{i}\ \forall 1\leq j\leq \text{v}_{i}$, and $\alpha=(\alpha^{(i)}_{j}),\ \beta=(\beta^{(i)}_{j})$ are vectors of residues modulo $p^{l}$ and $p^{s-l}$ respectively, and $1$ denotes the degree vector with all entries equal to 1.

Reducing each entry of the degree vectors on both sides modulo $p^{l}$, we find that $\alpha\equiv 1 (\text{mod }p^{l})$, so $\alpha=1$ (the degree vector with each entry equal to 1).\\
Since $u$ is a degree vector $u^{(i)}_{j}$ where $1\leq i\leq n-1$ and $1\leq j\leq \text{v}_{i}$, we know by (\ref{degbound}) that the degree of each $x_{ij}$ in $\Phi_{l}$ is less than $(\text{v}_{i}+1)p^{l}-1$. We also know that $\Phi_{l}$ is skew symmetric (under the action of $S_{\text{v}}$). Hence for a fixed value of $i$, the coefficient of $x^{up^{l}-1}$ in $\Phi_{l}(x,z,q)$ is nonzero only if all the $u^{(i)}_{j}$ are distinct, with $1\leq u^{(i)}_{j}\leq \text{v}_{i}$. Hence for each fixed $i$, the $u^{(i)}_{j}$ are a permutation of $1,\dots,\text{v}_{i}$. Thus, $u=\sigma(d)$ for some $\sigma \in S_{\text{v}}$, and
\[\text{coeff}_{x^{u p^{l}-1}}(\Phi_{l}(x,z,q))=\text{coeff}_{x^{\sigma(d)p^{l}-1}}(\Phi_{l}(x,z,q))=\epsilon(\sigma)\text{coeff}_{x^{dp^{l}-1}}(\Phi_{l}(x,z,q))=\epsilon(\sigma)T'_{l}(z,q)\]
where $\epsilon(\sigma)$ is the sign of the permutation $\sigma$. 
We then need to multiply this coefficient with $\text{coeff}_{x^{dp^{s}-1-up^{l}-1}}(\ovl{\Phi}_{s-l}(x^{p^{l}},z^{p^{l}},1))$ and sum over all degree vectors $u,v,\beta$ with the properties mentioned previously to obtain $\Phi_{s}(x,z,q)$.\\
Thus the first multiple in the sum (\ref{sumoversplits}) factors out and we obtain:
$$
T'_{s}(z,q)=T'_{l}(z,q) F_{1}(z^{p^{l}})
$$
where $F_{1}(z^{p^{l}})$ is a certain polynomial in $z$ independent of $q$. Similarly, repeating this argument for all terms in (\ref{split1})-(\ref{split4}) we obtain:
\bean
\begin{array}{l}
T'_{s+1}(z,q)=T'_{l}(z,q)F_{1}(z^{p^{l}})\\
T'_{s}(z^{p},q^{p})=T'_{l-1}(z^{p},q^{p})G_{1}(z^{p^{l}})\\
T'_{s}(z,q)=T'_{l}(z,q)F_{2}(z^{p^{l}})\\
T'_{s-1}(z^{p},q^{p})=T'_{l-1}(z^{p},q^{p})G_{2}(z^{p^{l}})
\end{array}
\eean
for some polynomials $G_{i}(z^{p^l})$ and $F_{i}(z^{p^l})$ independent of $q$. \\
Suppose $x$ appears with exponent $\alpha p^{l}-1$ in $\Phi_{l}(x,z,q)$, where $\alpha=(\alpha^{(i)}_{j}$) is a degree vector so that for each $i$ the $\alpha^{(i)}_{j}$ are a permutation of $1,\dots,\text{v}_{i}$. Let $\sigma\in S_{\text{v}}$ be the permutation so that $\sigma\cdot((t^{(i)}_{j}))=\alpha$, where $1\leq i\leq n-1,\ 1\leq j\leq v_{i}$, and $t^{(i)}_{j}=j$. The coefficient of this monomial is $\epsilon(\sigma)T'_{l}(z,q)$.

If such a monomial contributes to the coefficient of $x^{dp^{s+1}-1}$ in $\Phi_{s+1}(x,z,q)$ (as in (\ref{sumoversplits})), then we know that $x$ appears with exponent $p^{l}(mp^{s+1-l}+\beta)$ in $\Phi_{s+1-l}(x^{p^{l}},z^{p^{l}},1)$ so that\\
\[\alpha p^{l}-1+p^{l}(mp^{s-l+1}+\beta)=dp^{s+1}-1\]
The coefficient of this monomial also appears in $F_{1}(z^{p^{l}})$ by the definition of $F_{1}$. We will now show that this term appears in $G_{1}$ as well.

If we add the degree vector whose entries are all 1 to both sides, divide by $p$, subtract the degree vector 1, then multiply by $p$ again, we now have
\begin{align}\label{finaldegarg}p(\alpha p^{l-1}-1)+p^{l}(mp^{s-l+1}+\beta)=p(dp^{s}-1)\end{align}
Let us now consider (\ref{denlhs}). The coefficient of the monomial $(x^{p})^{\alpha p^{l-1}-1}$ in $\Phi_{l-1}(x^{p},z^{p},q^{p})$ is $\epsilon(\sigma)T'_{l-1}(z^{p},q^{p})$ for the same reason as in our previous computation of $\text{coeff}_{x^{\alpha p^{l}-1}}\Phi_{l}(x,z,q)$. We also know that 
\begin{multline*}\text{coeff}_{x^{p(dp^{s}-1)}}\Big(\Phi_{s}(x^{p},z^{p},q^{p})\Big) =  \sum\limits_{u',v',\alpha',\beta'}\, 
\text{coeff}_{x^{p(u'p^{l-1}-\alpha')}}\Big(\Phi_{l-1}(x^{p},z^{p},q^{p})\Big)\times\\ \text{coeff}_{x^{p(p^{l-1}(v' p^{s-l+1}-\beta'))}}\Big(\Phi_{s-l+1}(x^{p^l},1,z^{p^l})\Big)\end{multline*}This is analogous to (\ref{sumoversplits}). The LHS of this equation is $T'_{s}(z^{p},q^{p})$. The summation on the right is over degree vectors $u', v'$ and vectors of residues $\alpha',\ \beta'$ modulo $p^{l-1}$ and $p^{s-l+1}$ respectively such that \[p(u'p^{l-1}-\alpha')+p^{l}(v'p^{s-l+1}-\beta')=p(dp^{s}-1)\] 
We can divide by both sides by $p$, reduce each entry of the degree vectors on both sides modulo $p^{l-1}$ and conclude that each entry of $\alpha'$ is 1. Hence  \[u'p^{l-1}-1+p^{l-1}(v'p^{s-l+1}-\beta')=dp^{s}-1\] Adding the vector 1 to both sides, multiplying by $p$, and subtracting the vector 1 from both sides we have \[u'p^{l}-1+p^{l}(v'p^{s-l+1}-\beta')=dp^{s+1}-1\] as before. Recall that we previously chose a degree vector $\alpha$ so that the coefficient of $x^{\alpha p^{l}-1}$ in $\Phi_{l}(x,z,q)$ contributed to $T'_{s+1}(z,q)$. The coefficient of $x^{p(\alpha p^{l-1}-1)}$ in $\Phi_{s}(x^{p},z^{p},q^{p})$ is $\epsilon(\sigma)T'_{l-1}(z^{p},q^{p})$ as before. The coefficient of the monomial $x^{\xi}$ of $\ovl{\Phi}_{s-l+1}(x^{p^{l}},z^{p^{l}},1)$ so that \[\xi+p(\alpha p^{l-1}-1)=p(dp^{s}-1)\] (note that this appears in $G_{1}$ by definition) is exactly the coefficient of $x^{p^{l}(mp^{s-l+1}+\beta)}$, which is exactly the term that appears in $F_{1}$. Hence every term in $F_{1}$ occurs in $G_{1}$, and we can repeat the argument to show that every term in $G_{1}$ appears in $F_{1}$ as well, which implies that $F_{1}=G_{1}$. We can repeat the argument to show that $F_{2}=G_{2}$ as well. So we have
\[\frac{T'_{s+1}(z,q)}{T'_{s}(z^{p},q^{p})}=\frac{T'_{s}(z,q)}{T'_{s-1}(z^{p},q^{p})}\] 
when $q$ is an arbitrary primitive $p^{l}$-th root of unity, where $1\leq l<s$.  Finally,  by (\ref{tqdef}) $T'_s(z,q)=(-1)^{(p^{s}-1)N\omega} T_s(z,q)$, so 
\begin{align*}
    \frac{T_{s+1}(z,q)}{T_{s}(z^{p},q^p)}=(-1)^{(p^{s+1}-p^{s})N\omega}\frac{T'_{s+1}(z,q)}{T'_{s}(z^{p},q^p)}\\
    \frac{T_{s}(z,q)}{T_{s-1}(z^{p},q^p)}=(-1)^{(p^{s}-p^{s-1})N\omega}\frac{T'_{s}(z,q)}{T'_{s-1}(z^{p},q^p)}
\end{align*}
And we have $(-1)^{(p^{s+1}-p^{s})N\omega}=(-1)^{(p-1)N\omega}=(-1)^{(p^{s}-p^{s-1})N\omega}$ (since $p$ is an odd prime). Hence 
\[\frac{T_{s+1}(z,q)}{T_{s}(z^{p},q^{p})}=\frac{T_{s}(z,q)}{T_{s-1}(z^{p},q^{p})}\] 
which finishes the proof. 
\end{proof}

\begin{cor}
The polynomials $T_s(z)$ defined by (\ref{tpols}) satisfy the Dwork type congruences 
\bean \label{svcorr}
\dfrac{T_{s+1}(z)}{T_{s}(z^p)} \equiv  \dfrac{T_{s}(z)}{T_{s-1}(z^p)}  \pmod{p^s} 
\eean 
\end{cor}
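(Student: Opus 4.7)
The plan is to deduce the corollary by specializing the $q$-deformed congruence (\ref{mainform}) at $q=1$. First, observe that $[p^s]_q|_{q=1} = 1 + 1 + \cdots + 1 = p^s$, so interpreting the $q$-congruence as the polynomial divisibility
\[
T_{s+1}(z,q)\,T_{s-1}(z^p,q^p) - T_s(z,q)\,T_s(z^p,q^p) \;\equiv\; 0 \pmod{[p^s]_q}
\]
in $\mathbb{Z}[z,q]$ and then evaluating at $q=1$ yields a congruence modulo $p^s$ in $\mathbb{Z}[z]$.

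Second, I must reconcile $T_s(z,q)|_{q=1}$ with $T_s(z)$. The paper records (just after (\ref{tqdef})) that $T_s(z,1) = (-1)^{\theta_s}T_s(z)$ where $\theta_s = C\cdot(p^s-1)(1-2\omega)/2$ and $C = \sum_{m=1}^{n-1}\tfrac{\text{v}_m(\text{v}_m-1)}{2}$. Substituting this into the specialization above and using $(-1)^{2\theta_s}=1$ gives
\[
(-1)^{\theta_{s+1}+\theta_{s-1}}\,T_{s+1}(z)\,T_{s-1}(z^p) \;\equiv\; T_s(z)\,T_s(z^p) \pmod{p^s}.
\]
So the corollary reduces to verifying that $\theta_{s+1}+\theta_{s-1}$ is even, equivalently that $\theta_{s+1}-\theta_s$ and $\theta_s-\theta_{s-1}$ share parity.

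The final step is this parity check, which is straightforward: both differences have the form $C\cdot p^{\bullet}\cdot (p-1)(1-2\omega)/2$ with $p^{\bullet}\in\{p^s,p^{s-1}\}$ odd (since $p$ is odd), and the factor $(p-1)(1-2\omega)/2 = (p-1)/2 - \omega(p-1)$ is an integer because $p$ is odd and $\omega(p-1) = rl \in \mathbb{Z}$ (using $p = lq+1$). Hence the parity of each difference coincides with the parity of $C\cdot(p-1)(1-2\omega)/2$, and they agree, yielding $(-1)^{\theta_{s+1}+\theta_{s-1}}=1$ and therefore (\ref{svcorr}). No substantive obstacle is anticipated: the argument is nothing more than bookkeeping the signs that appear when specializing $q\to 1$, combined with the trivial evaluation $[p^s]_q|_{q=1}=p^s$.
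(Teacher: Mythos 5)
Your proposal is correct and follows essentially the same route as the paper: specialize the $q$-congruence at $q=1$, account for the sign $(-1)^{\theta_s}$ relating $T_s(z,1)$ to $T_s(z)$, and check that $\theta_{s+1}+\theta_{s-1}$ is even. The only cosmetic difference is that you compare the parities of the consecutive differences $\theta_{s+1}-\theta_s$ and $\theta_s-\theta_{s-1}$, whereas the paper directly verifies $\theta_{s+1}-\theta_{s-1}\in 2\mathbb{Z}$; both reduce to the same elementary arithmetic using $p=lq+1$.
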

\begin{proof}

Specializing (\ref{mainform}) to $q=1$, and noting that $[p^s]_q$ specializes in this limit to $p^s$ we obtain 
\[(-1)^{\theta_{s+1}-\theta_{s}}\frac{T_{s+1}(z)}{T_{s}(z^{p})}\equiv(-1)^{\theta_{s}-\theta_{s-1}}\frac{T_{s}(z)}{T_{s-1}(z^{p})}(\text{ mod }p^{s})\]
where the sign appears as discussed at the end of Section~\ref{signref}.  Thus
\[(-1)^{\theta_{s+1}-2\theta_{s}+\theta_{s-1}}\frac{T_{s+1}(z)}{T_{s}(z^{p})}\equiv\frac{T_{s+1}(z)}{T_{s}(z^{p})}\equiv\frac{T_{s}(z)}{T_{s-1}(z^{p})}\text{(mod }p^{s})\]
This follows since $2\theta_{s}$ is even, and $(-1)^{\theta_{s+1}+\theta_{s-1}}=(-1)^{\theta_{s+1}-\theta_{s-1}}=1$, since 
\[\theta_{s+1}-\theta_{s-1}=\left(\sum_{m=1}^{n-1}\frac{v_{m}(v_{m}+1)}{2}\right)\frac{p^{s-1}(p^{2}-1)(1-2\omega)}{2}\in 2\mathbb{Z}\]
Hence we arrive at (\ref{svcorr}). 
\end{proof}

\bigskip

\end{document}